\documentclass[12pt, reqno]{amsart}

\usepackage{amsmath}
\usepackage{amsfonts}
\usepackage{dsfont}
\usepackage{a4wide}
\usepackage{amssymb}
\usepackage{amsthm}
\usepackage[colorlinks, citecolor=blue, linkcolor=red]{hyperref}
%\usepackage{showkeys}

%\usepackage[]{draftcopy}
%\draftcopyName{ Draft}{240}
%\draftcopySetScale{270}

\flushbottom
\theoremstyle{plain}

\newtheorem{teo}{Theorem}[section]
\newtheorem{lemma}[teo]{Lemma}
\newtheorem{prop}[teo]{Proposition}
\newtheorem{cor}[teo]{Corollary}
\newtheorem{ackn}{Acknowledgments\!}

\theoremstyle{definition}

\newtheorem{conge}[teo]{Conjecture}

\theoremstyle{remark}

\newtheorem{rem}[teo]{Remark}

\def\ricc{{\mathrm {Ric}}}

\numberwithin{equation}{section}

\def\RR{{\mathbb R}}

\def\gt{\widetilde{g}}

\def\gt{\widetilde{g}}

\def\eps{\varepsilon}

\newcommand{\set}[1]{{\left\{#1\right\}}}               % tra parentesi graffe
                  % tra tonde
                  % tra quadre
\newcommand{\abs}[1]{{\left|#1\right|}}                 % valore assoluto
              % norma
      % pairing

\title[Uniqueness of critical metrics for a quadratic curvature functional]{Uniqueness of critical metrics for a quadratic curvature functional}
\date{\today}

\author[Giovanni Catino]{Giovanni Catino}
\address[Giovanni Catino]{Dipartimento di Matematica, Politecnico di Milano, Piazza Leonardo da Vinci 32, 20133 Milano, Italy}
\email[]{giovanni.catino@polimi.it}

\author[P. Mastrolia]{Paolo Mastrolia}
\address[Paolo Mastrolia]{Dipartimento di Matematica, Universit\`{a} degli Studi di Milano, Via Saldini 50, 20133 Italy.}
\email[]{paolo.mastrolia@unimi.it}

\author[D. D. Monticelli]{Dario D. Monticelli}
\address[Dario Monticelli]{Dipartimento di Matematica, Politecnico di Milano, Piazza Leonardo da Vinci 32, 20133 Milano, Italy}
\email[]{dario.monticelli@polimi.it}

%\date{\today}

\begin{document}

\begin{abstract}
In this paper we prove a new rigidity results for complete, possibly non-compact, critical metrics of the quadratic curvature functional $\mathfrak{S}^2 = \int R_g^{2} dV_g$: we show that critical metrics $(M^n, g)$ with finite energy are always scalar flat, i.e.  global minima, provided $n\geq 10$.
\end{abstract}

\maketitle

\begin{center}

\noindent{\it Key Words: Quadratic functionals, critical metrics, rigidity results}

\medskip

\centerline{\bf AMS subject classification: 53C21, 53C24, 53C25}

\end{center}

\

%\tableofcontents

\section{Introduction}

This paper is devoted to the study of critical metrics for the quadratic curvature functional
$$
\mathfrak{S}^2 = \int R_g^{2} dV_g.
$$
To fix the notation, let $M^{n}$, $n\geq 2$, be a $n$--dimensional smooth  manifold without boundary. Given a Riemannian metric $g$ on $M^n$, we denote with $\operatorname{Riem_g}$, $W_g$, $\ricc_g$ and $R_g$, respectively, the Riemann curvature tensor, the Weyl tensor, the Ricci tensor and the scalar curvature. It is well known that a basis for the space of quadratic curvature functionals, defined on the space of smooth metrics on $M^n$, is given by
$$
\mathfrak{W}^2 = \int |W_g|^{2} dV_g\,, \qquad \mathfrak{r}^2 = \int |\ricc_g|^{2} dV_g\,, \qquad\, \mathfrak{S}^2 = \int R_g^{2} dV_g.
$$
The only quadratic functional in the case $n=2$ is given by $\mathfrak{S}^2$, while in dimension $n=3$ one only has $\mathfrak{S}^2$ and $\mathfrak{r}^2$.
From the standard decomposition of the Riemann tensor, for every $n\geq4$, one has
$$
\mathfrak{R}^2 = \int |\operatorname{Riem_g}|^{2} dV_g = \int \left(|W_g|^{2}+\frac{4}{n-2}|\ricc_g|^{2}-\frac{2}{(n-1)(n-2)}R_g^{2}\right) dV_g \,.
$$
 Such functionals have attracted a lot of attention from the mathematics' and physicists' communities in recent years. In particular, in \cite{CMM} (see also references therein) we proved rigidity results for critical metrics of the functional $\mathfrak{S}^2$ and for the functional $$\mathfrak{F}^{2}_t = \int |\ricc_g|^{2} dV_g +  t \int R^{2}_g  dV_g \,,$$ for suitable values of the parameter $t\in\mathbb{R}$. As far as the functional $\mathfrak{S}^2$ is concerned, we showed that in case $n=2$ all critical metrics are flat, and thus they are global minima of the functional. The same result holds also when $n=3$, under the additional hypothesis that $R_g\in L^q(M^3)$ for some $q\in(1,\infty)$. The dimension $n=4$ is special, as in this case critical metrics turn out to have harmonic scalar curvature. Thus, if $M^4$ is compact, then it is scalar flat, or it has constant scalar curvature and it is Einstein; on the other hand, if $M^4$ is complete, non-compact and $R_g\in L^q(M^4)$ for some $q\in(1,\infty)$, then a classical result of Yau implies that $M^4$ has constant scalar curvature, and hence it is scalar flat or Einstein with finite volume. Finally, when $n\geq5$ we showed that there exists $q^*>2$ such that a critical metric of $\mathfrak{S}^2$ having scalar curvature $R_g$ which is \emph{bounded from below} and satisfying $R_g\in L^q(M^n)$ for $q\in(1,q^*)$ must be scalar-flat, and thus it is a global minimum of the functional.

 We conjecture however that the condition that $R_g$ is bounded from below in the above results, when $n\geq5$, is indeed not necessary, i.e.
\begin{conge}
Let $(M^{n},g)$, $n\geq5$, be a complete critical metric of $\mathfrak{S}^{2}$ with finite energy. Then $(M^{n},g)$ is scalar flat, and thus a global minimum of the functional $\mathfrak{S}^2$.
\end{conge}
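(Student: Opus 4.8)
Since the final statement is recorded as a conjecture, I outline a strategy for the full range $n\ge 5$ and indicate where it must break down into the open part.

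\smallskip
\noindent\textbf{Step 1 (trace equation and structural identity).} I would first write the Euler--Lagrange equation of $\mathfrak{S}^2$. A standard first variation gives, for a critical metric,
\[
2\,\nabla^2 R_g - 2\,R_g\,\ricc_g - 2\,(\Delta R_g)\,g + \tfrac12 R_g^2\, g = 0 .
\]
Tracing this identity yields the scalar equation
\[
\Delta R_g = \frac{n-4}{4(n-1)}\,R_g^2 =: c_n R_g^2 , \qquad c_n>0 \ \text{for } n\ge 5 ,
\]
which for $n=4$ reduces to the harmonicity of $R_g$ noted in the introduction. Eliminating $\Delta R_g$ between the two displays and taking the trace--free part, a short computation produces the clean identity $R_g\,\dRic = \overset{\circ}{\nabla^2 R_g}$ (the trace--free Hessian), which records the tensorial information beyond the scalar trace equation and is needed in Step 3.

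\smallskip
\noindent\textbf{Step 2 (sign of the scalar curvature).} Since $c_n>0$, the trace equation says $R_g$ is subharmonic, so $R_g^+=\max\{R_g,0\}$ is a nonnegative subharmonic function with $\int_M (R_g^+)^2\,dV_g<\infty$ by the finite--energy hypothesis. By Yau's $L^p$--Liouville theorem for nonnegative subharmonic functions on complete manifolds, $R_g^+$ must be constant; inserting a positive constant into $\Delta R_g=c_nR_g^2$ forces it to vanish, so $R_g\le 0$ everywhere on $M$. Setting $u=-R_g\ge 0$, the problem reduces to showing $u\equiv 0$ for a nonnegative, finite--energy solution of $\Delta u = -\,c_n\,u^2$ with $u\in L^2(M)$.

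\smallskip
\noindent\textbf{Step 3 (nonlinear vanishing estimate).} The natural attempt is to test $\Delta u=-c_nu^2$ against $\varphi^2 u^{q}$ for a logarithmic cutoff $\varphi$, producing a Caccioppoli inequality of the form $\int \varphi^2|\nabla u|^2 \lesssim \int \varphi^2 u^{3}+\int u^2|\nabla\varphi|^2$, and then to let $\varphi\uparrow 1$. Indeed, once $\nabla R_g$ is shown to lie in $L^2$ with controlled annular decay, one gets $\int_M \Delta R_g\,dV_g=0$, and the trace equation then yields $\int_M R_g^2=0$, i.e.\ $R_g\equiv 0$. The subtlety is that the scalar equation alone cannot drive this: the relevant exponent is $p=2$, whose Euclidean borderline is the Lane--Emden exponent $\tfrac{n+2}{n-2}$, so the cubic term is genuinely subcritical only at $n=5$ and becomes critical ($n=6$) or supercritical ($n\ge 7$) otherwise. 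Here the structural identity $R_g\dRic=\overset{\circ}{\nabla^2 R_g}$ should intervene: on $\{R_g\neq 0\}$ it bounds $|\ricc_g|$ in terms of $|\nabla^2 R_g|$ and $R_g^2$, giving control of the local geometry and volume growth (via Bishop--Gromov--type comparison), which combined with the Bochner formula for $|\nabla R_g|^2$ and a Kato inequality is meant to upgrade the integrability of $\nabla R_g$ and close the cutoff argument.

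\smallskip
\noindent\textbf{Main obstacle.} The crux is exactly Step 3 in the range $6\le n\le 9$, with borderline care at $n=5,6$: the cubic term $\int_M u^3$ is critical or supercritical relative to the energy $\int_M|\nabla u|^2$, and the dimensional constants arising in the Kato--Sobolev absorption only acquire the favourable sign for $n\ge 10$ (which is what the authors' theorem secures). Removing the lower bound on $R_g$ across the entire range $n\ge 5$ would require a genuinely sharper integral inequality, or a monotonicity/blow--up argument exploiting the coupling of $R_g$ to the ambient metric through $R_g\dRic=\overset{\circ}{\nabla^2 R_g}$; this is precisely why the statement stands as a conjecture below $n=10$.
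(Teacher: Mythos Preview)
The statement is a conjecture and the paper does not prove it in full; it proves only the case $n\ge 10$ (Theorem~\ref{teo}). Your Steps~1--2 are essentially correct and match the paper: the trace equation $\Delta R=\frac{n-4}{4(n-1)}R^2$ and the trace-free identity $R\,\dRic=\overset{\circ}{\nabla^2 R}$ are exactly equations \eqref{eq}--\eqref{eq1}, and the $L^p$-Liouville argument for $R^+$ recovers Lemma~\ref{l-hdnonpos}/Corollary~\ref{c-strong}.

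Your Step~3, however, is both vague and substantively different from what the paper does in the range $n\ge 10$, and your diagnosis of the threshold is off. The paper does \emph{not} run a Bochner--Kato--Sobolev absorption using $R\,\dRic=\overset{\circ}{\nabla^2 R}$ to control $|\ricc|$ and then invoke volume comparison. Instead, on $\{R<0\}$ it performs the conformal change $\gt=|R|^{6/(n-4)}g$ and shows (Proposition~\ref{pro-qe}, Corollary~\ref{cor-qe}) that $(M,\gt)$ is a \emph{steady quasi-Einstein} manifold,
\[
\Ric_{\gt}+\nabla^2_{\gt}f-\tfrac{n-10}{4(n-1)}\,df\otimes df=0,\qquad f=\tfrac{2(n-1)}{n-4}\log|R|.
\]
The coefficient $\tfrac{n-10}{4(n-1)}$ is the true source of the $n=10$ threshold: for $n\ge 10$ one can (i) prove $\gt$ is complete via a second-variation/Fischer--Colbrie argument along a $\gt$-minimizing geodesic (Proposition~\ref{pro-completeness}), and (ii) invoke Chen's and Wang's results that complete steady solitons/quasi-Einstein manifolds have $\widetilde R\ge 0$. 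Unwinding $\widetilde R\ge 0$ through the conformal formula gives the \emph{pointwise} gradient estimate $|\nabla R|^2\le \tfrac{(n-4)^2}{6(n-1)(n+2)}|R|^3$ (inequality~\eqref{eq-gradest}); this, not a Caccioppoli inequality, is what makes the final cutoff computation close.

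So the genuine gap in your outline is that the gradient bound $|\nabla u|^2\lesssim u^3$ is the crux, and you have not proposed a mechanism to obtain it. Your Lane--Emden heuristic singles out $n=6$, not $n=10$, and the claim that ``Kato--Sobolev absorption acquires the favourable sign for $n\ge 10$'' is asserted without any computation; nothing in the paper supports that such a direct PDE route works. The paper's mechanism---conformal reduction to a steady quasi-Einstein structure, completeness of $\gt$, and the sign of $\tfrac{n-10}{4(n-1)}$---is precisely what your sketch is missing, and it is also what explains why $5\le n\le 9$ remains open.
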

We also raise the question whether a finite energy assumption $R_g\in L^q(M^n)$ is necessary, in order to deduce that a complete non-compact critical metric of $\mathfrak{S}^{2}$ must be scalar flat, for $n\geq3$, $n\neq4$.

We recall that the Euler--Lagrange equation for a critical metric of $\mathfrak{S}^2$ can be computed by using variations with compact support and is given by
$$
2R \, \ricc - 2\nabla^{2} R + 2\Delta R\, g \,=\, \frac{1}{2} R^{2}\,g \,,
$$
or, equivalently,
\begin{equation}\label{eq}
R \,\ricc - \nabla^{2} R \,=\, \frac{3}{4(n-1)} R^{2}\,g \,,
\end{equation}
\begin{equation}\label{eq1}
\Delta R \,=\, \frac{n-4}{4(n-1)} R^{2} \,,
\end{equation}
where equation~\eqref{eq1} is just the trace of~\eqref{eq} (see also Proposition 4.66 in Besse's book \cite{Besse}; note that, in Corollary 4.67, Besse restricts the functional $\mathfrak{S}^2$ to unit-volume metrics).

The main result of this paper is the following  Theorem, where we give an affirmative answer to Conjecture 1.1 in  case    $n\geq10$, thus improving our \cite[Teorem 1.5]{CMM} in this range of dimensions: indeed, we have the following

\begin{teo}\label{teo}
Let $(M^{n},g)$, $n\geq 10$, be a complete critical metric of $\mathfrak{S}^{2}$ with finite energy, i.e. $R_g\in L^2(M^n)$. Then $(M^{n},g)$ is scalar flat, and thus a global minimum of the functional $\mathfrak{S}^2$.
\end{teo}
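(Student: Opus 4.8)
The plan is to argue in two steps: first show that a finite--energy critical metric must have $R\le 0$, and then prove a vanishing theorem for $u:=-R$ out of the full Euler--Lagrange system, with the dimensional restriction $n\ge 10$ entering through the sign of one explicit constant. For the first step, since $n\ge 10$ the constant $c_n:=\frac{n-4}{4(n-1)}$ is positive, so the trace equation \eqref{eq1} says $\Delta R=c_nR^2\ge 0$: the scalar curvature is subharmonic. Hence $R^+:=\max\{R,0\}$ is nonnegative, locally Lipschitz and distributionally subharmonic, with $R^+\le |R|\in L^2(M)$. Testing $\Delta R^+\ge 0$ against $\phi^2R^+$, with $\phi$ a cutoff equal to $1$ on $B_r$, supported in $B_{2r}$ and $|\nabla\phi|\le C/r$, a standard Caccioppoli manipulation gives $\int_M\phi^2|\nabla R^+|^2\le 4\int_M(R^+)^2|\nabla\phi|^2\le \frac{C}{r^2}\int_{B_{2r}}(R^+)^2\to 0$, so $R^+$ is constant; a positive constant would force $R$ itself to be that constant, contradicting $\Delta R=c_nR^2>0$, so $R^+\equiv 0$. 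Thus $u:=-R\ge 0$, $u\in L^2(M)$, and by \eqref{eq}, \eqref{eq1}
\[
\Delta u=-c_nu^2,\qquad \nabla^2u=u\,\ricc+d_nu^2g,\qquad d_n:=\tfrac{3}{4(n-1)} .
\]
(When $M$ is closed this already ends the proof, $R$ being a constant that must vanish by \eqref{eq1}.)

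The key elementary identity is
\[
\tfrac{c_n}{2}-d_n=\frac{n-10}{8(n-1)},
\]
nonnegative precisely when $n\ge 10$. Concretely, contracting the second equation gives $u\,\ricc(\nabla u,\nabla u)=\nabla^2u(\nabla u,\nabla u)-d_nu^2|\nabla u|^2$, and integrating by parts (using $\Delta u=-c_nu^2$) yields
\[
\int_M\phi^2u\,\ricc(\nabla u,\nabla u)=\frac{n-10}{8(n-1)}\int_M\phi^2u^2|\nabla u|^2+(\text{terms containing }\nabla\phi),
\]
so for $n\ge 10$ this curvature integral has the good sign. I would feed this into the Bochner formula for $|\nabla u|^2$, integrated against a weight $\phi^2u^{a}$ with $a=a(n)$ suitably chosen, and combine it with the Kato inequality $|\nabla^2u|^2\ge\frac1n(\Delta u)^2$ and the pointwise identity $|\nabla^2u|^2=u^2|\ricc|^2-\frac{3(5n-8)}{16(n-1)^2}u^4$ coming from squaring \eqref{eq}. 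After these substitutions and the elementary relations $\int_M\phi^2u^{a+1}|\nabla u|^2=\frac{c_n}{a+2}\int_M\phi^2u^{a+4}+(\text{terms with }\nabla\phi)$, all the bulk integrals become multiples of $\int_M\phi^2u^{a+4}$ and $\int_M\phi^2u^{a+2}|\dRic|^2$, and for $n\ge 10$ the aim is to land on an inequality bounding a positive combination of these nonnegative quantities by the $\nabla\phi$--terms alone.

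The main obstacle is to show that, with the above $\phi$, the terms containing $\nabla\phi$ go to $0$ as $r\to\infty$ using \emph{only} $R\in L^2(M)$. A naive bookkeeping produces error integrals with powers of $u$ larger than $2$ and factors of $|\nabla u|$, which $\|R\|_{L^2}$ alone does not control; the point of $n\ge 10$, through the sign above and a careful (likely iterated) bootstrap on the integrability of $u$ and $|\nabla u|$, is to force the surviving error integrals down to multiples of $\int_{B_{2r}\setminus B_r}u^2$, which vanish by absolute continuity of $\int_M u^2$. Granting this, the inequality from the second step yields $u\equiv 0$, i.e. $R\equiv 0$, so $(M^n,g)$ is scalar flat and hence a global minimum of $\mathfrak{S}^2$ (which is nonnegative, vanishing exactly on scalar--flat metrics). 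I expect this last step --- verifying that the chain of integrations by parts closes with only the $L^2$--control available, and only for $n\ge 10$ --- to be the technical heart of the argument.
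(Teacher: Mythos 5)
Your first step is fine and essentially reproduces what the paper imports from \cite{CMM} (Lemma 5.1 there, our Lemma \ref{l-hdnonpos} plus Corollary \ref{c-strong}): an $L^2$ subharmonic function argument gives $R\le 0$, and the strong maximum principle reduces matters to the case $R<0$, i.e.\ $u=-R>0$. Your algebra is also correct as far as it goes: the constant $\tfrac{c_n}{2}-d_n=\tfrac{n-10}{8(n-1)}$ is exactly the dimensional threshold that drives the paper (it is, up to normalization, the coefficient $\tfrac{n-10}{4(n-1)}$ in the quasi-Einstein equation \eqref{eq-qe}), and the identity $|\nabla^2u|^2=u^2|\ricc|^2-\tfrac{3(5n-8)}{16(n-1)^2}u^4$ checks out.

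However, the second step is a plan, not a proof, and the gap you yourself flag --- closing the chain of integrations by parts using only $R\in L^2$ --- is precisely the point where the argument cannot be completed by bookkeeping alone. A weighted Bochner inequality tested against $\phi^2u^a$ inevitably produces boundary terms of the schematic form $\int u^{a+1}|\nabla u|^2|\nabla\phi|\,\phi$ and $\int u^{a+3}|\nabla\phi|^2$, and with no pointwise information relating $|\nabla u|$ to $u$, and no lower bound on $u$, these are not controlled by $\|u\|_{L^2}$ for any admissible $a$; moreover you have not verified that the bulk terms actually combine with a favorable sign for $n\ge10$ --- you only check the sign of one constant. The paper resolves exactly this obstruction by a structurally different mechanism: the conformal change $\gt=|R|^{6/(n-4)}g$ turns the critical equations into a steady gradient Ricci soliton ($n=10$) or steady quasi-Einstein structure ($n>10$); the completeness of $\gt$ is proved by a Fischer--Colbrie--type second variation argument along $\gt$-minimizing geodesics (Proposition \ref{pro-completeness}), where $n\ge10$ enters through the sign of $1+2k-(n-2)k^2$ at $k=\tfrac{3}{n-4}$, with a separate borderline analysis at $n=10$; then the known nonnegativity of the scalar curvature of complete steady solitons (Chen) and quasi-Einstein manifolds (Wang) yields the \emph{pointwise} gradient estimate $|\nabla u|^2\le\tfrac{(n-4)^2}{6(n-1)(n+2)}u^3$ and hence the lower bound $u(x)\ge c_1/(c_2+d(x,O)^2)$. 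It is this pointwise information --- not an integral bootstrap --- that makes the final cutoff computation close with only $u\in L^q$, $q<\tfrac{7n-10}{2(n-4)}$. Without a substitute for the gradient estimate, your error terms do not reduce to multiples of $\int_{B_{2r}\setminus B_r}u^2$, and the proof does not go through as proposed.
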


%\begin{teo}\label{ciovannigat}
%Let $(M^{n},g)$, $7\leq n \leq 9$, be a complete critical metric of $\mathfrak{S}^{2}$ with finite energy, i.e. $R_g\in L^2(M^n)$, and with a finite number $N$ of ends, $N\geq 2$. Then $(M^{n},g)$ is scalar flat, and thus a global minimum of the functional $\mathfrak{S}^2$.
%\end{teo}

We actually show our result under the slightly weaker assumption that $R_g\in L^q(M)$ for some $q\in(1,q^*)$, for a suitable explicit $q^*>2$.

Our proof relies on a preliminary result that guarantees that a critical metric for $\mathfrak{S}^2$ with $R_g\in L^q(M)$ for some $q\in(1,\infty)$ and $n\geq5$ must have either identically vanishing or strictly negative scalar curvature. We explicitly note that there exist no critical metric for the $\mathfrak{S}^2$ functional with $R_g$ positive and $n\neq4$; indeed any such metric should have constant scalar curvature, thus it can exist only when $n=4$ and $(M,g)$ is Einstein, see \cite{Cat}. The weaker assumption $R_g\geq0$ is enough to conclude, using the strong maximum principle, in dimensions $n\leq4$. The case when the scalar curvature may change sign, and thus it must have infinite energy, remains an interesting and completely open problem.

In order to prove our main theorem we show that, under our assumptions, there exists no critical metric with negative scalar curvature. Indeed, if $R_g<0$ we can perform the conformal change of the metric
\begin{equation}\label{12}
\gt=|R_g|^\frac{6}{n-4}g
\end{equation}
 to produce a ``steady quasi-Einstein structure'' (in particular a steady Ricci soliton if $n=10$), i.e. it satisfies
$$
\ricc_{\gt}+\nabla^2_{\gt}f-\frac{n-10}{4(n-1)}df\otimes df=0
$$
with
$$
f=\frac{2(n-1)}{n-4}\log |R|.
$$
Note that $\frac{n-10}{4(n-1)}\geq0$ when $n\geq10$, while it is negative for $5\leq n\leq9$. This ``conformal technique'' has been used in the literature, for instance  by Anderson in the context of stationary space-times \cite{AndersonS}, by Fischer--Colbrie \cite{fisch} to study stable minimal surfaces in $\RR^3$ and, more recently, further exploited to study the stable Bernstein problem in $\RR^3$ by Catino--Mastrolia--Roncoroni \cite{CMR} or minimally immersed submanifold in the sphere by Magliaro--Mari--Roing--Savas-Halilaj \cite{MagliaroMariecc}.

Using lower bounds on the scalar curvature of $\gt$ we are able to deduce a gradient estimate on $R_g$, which then allows us to conclude that $R_g$ must actually vanish everywhere if it belongs to $L^q(M)$ for $q\in(1,q^*)$, for a suitable $q^*>2$, similarly as we did in \cite{CMM}. A key step in this construction is to show completeness of the conformal metric $\gt$, that we are able to obtain  when $n\geq10$.

%or when $7\leq n \leq 9$ and $M$ has a finite number $N$ of ends, $N\geq2$.

We also explicitly comment on the estimates on the scalar curvature of $\gt$ that we use to obtain the gradient estimate on $R_g$:  we rely on results which are already available in the literature concerning nonnegativity of the scalar curvature for steady Ricci solitons (when $n=10$) and for steady quasi-Einstein manifolds (when $n\geq11$).

%in this paper we also show a new bound from below on the scalar curvature for such structures, which may have independent interest, that we employ to conclude the proof also in case $7\leq n \leq9$.

\

Note that Conjecture 1.1 for $5\leq n\leq9$ and the question whether a finite energy assumption on $R_g$ is necessary in order to prove that a critical metric for $\mathfrak{S}^2$ must be scalar flat remain still open.

\

The rest of the paper is organized as follows. In Section \ref{sec5} we show that the conformal change of the metric \eqref{12} gives rise to a quasi-Einstein manifold $(M^n,\gt)$, while in Section \ref{compl} we prove that $\gt$ is complete under the hypotheses of  Theorem \ref{teo}. In Section \ref{pr1}  we provide the proof of Theorem \ref{teo}

\

\begin{ackn}
%\noindent The authors would like to thank Prof. M. T. Anderson for many helpful discussions.
%
The first and second authors are members of the {\em GNSAGA, Gruppo Nazionale per le Strutture Algebriche, Geometriche e le loro Applicazioni} of Indam. The third author is a member of {\em GNAMPA, Gruppo Nazionale per l'Analisi Matematica, la Probabilit\`{a} e le loro Applicazioni} of Indam. The first and second authors are partially funded by 2022 PRIN project 20225J97H5 ``Differential Geometric Aspects of Manifolds via Global Analysis''.
\end{ackn}

\

\section{Conformal quasi-Einstein manifolds}\label{sec5}

From now on we will drop the subscript $g$ in the notation of geometric objects. First of all we recall the following Lemma (see Lemma 5.1 in \cite{CMM}):

\begin{lemma}\label{l-hdnonpos}\ Let $(M^{n},g)$, $n\geq 5$, be a complete, non-compact, critical metric of $\mathfrak{S}^2$ with $R\in L^{q}(M^{n})$ for some $1<q<\infty$. Then $(M^{n},g)$ has non-positive scalar curvature.
\end{lemma}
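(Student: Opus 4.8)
The plan is to use equation~\eqref{eq1} together with a capacity/cutoff argument adapted to the $L^q$ integrability of $R$. First I would observe that by the strong maximum principle it suffices to rule out points where $R>0$: if $R(p)>0$ for some $p$, then near $p$ we have $\Delta R = \frac{n-4}{4(n-1)}R^2 \geq 0$, so $R$ is subharmonic on the (open) set $\{R>0\}$. The idea is to show that a nonnegative, nontrivial subharmonic function satisfying the quantitative growth $\Delta R \geq c\,R^2$ with $c=\frac{n-4}{4(n-1)}>0$ (here we use $n\geq 5$) cannot be globally $L^q$ for finite $q$ on a complete manifold. Equivalently, we rule out $\sup_M R>0$ and then conclude $R\le 0$ everywhere; to handle the borderline we also need the case $R\ge 0$ with $R$ vanishing somewhere, where the strong maximum principle forces $R\equiv 0$, which is still consistent with the conclusion $R\le 0$.

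The core estimate I would carry out is the following. Let $\phi$ be a standard logarithmic cutoff supported on a ball $B_{2\rho}$, equal to $1$ on $B_\rho$, with $|\nabla\phi|\le C/\rho$. Multiply \eqref{eq1} by $R_+^{q-1}\phi^2$ (where $R_+=\max\{R,0\}$) and integrate by parts:
\begin{equation}\label{plan-ibp}
\frac{n-4}{4(n-1)}\int_M R_+^{q+1}\phi^2\,dV = -\int_M \nabla R\cdot\nabla\big(R_+^{q-1}\phi^2\big)\,dV = -(q-1)\int_M R_+^{q-2}|\nabla R_+|^2\phi^2\,dV - 2\int_M R_+^{q-1}\phi\,\nabla R_+\cdot\nabla\phi\,dV.
\end{equation}
Since $q>1$ the first term on the right is nonpositive, and by Cauchy--Schwarz and Young's inequality the second term is bounded by $\frac{q-1}{2}\int R_+^{q-2}|\nabla R_+|^2\phi^2 + C\int R_+^{q}|\nabla\phi|^2$, so that
\begin{equation}\label{plan-main}
\frac{n-4}{4(n-1)}\int_M R_+^{q+1}\phi^2\,dV \leq C\int_M R_+^{q}\,|\nabla\phi|^2\,dV \leq \frac{C}{\rho^2}\int_{B_{2\rho}\setminus B_\rho} R_+^{q}\,dV.
\end{equation}
The right-hand side tends to $0$ as $\rho\to\infty$ because $R\in L^q(M)$ (so the tails $\int_{B_{2\rho}\setminus B_\rho}R_+^q\,dV\to 0$), giving $\int_M R_+^{q+1}\,dV=0$, hence $R_+\equiv 0$ and $R\le 0$. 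One subtlety: $R_+^{q-1}$ and $R_+^{q-2}$ may be singular where $R_+=0$ when $q<2$, so I would either first do the computation on $\{R>\eps\}$ with $R_+$ replaced by $(R-\eps)_+$, or approximate by $(R_+^2+\eps^2)^{(q-1)/2}$ and let $\eps\to 0$, using monotone/dominated convergence; this is routine. Another subtlety is justifying the integration by parts on a noncompact complete manifold, which is exactly why the compactly supported cutoff $\phi$ is used.

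The main obstacle I anticipate is purely in the bookkeeping of the range of $q$: the argument as sketched works for all $q\in(1,\infty)$ essentially verbatim, which matches the statement, so the only real care needed is (i) the regularization handling $q$ near $1$, and (ii) making sure the cutoff estimate is clean — in particular that one does not need a Sobolev or volume-growth hypothesis, which one does not, since both sides of \eqref{plan-main} are integrals of powers of $R$ against cutoffs and the coefficient $\frac{n-4}{4(n-1)}$ is a fixed positive constant for $n\geq 5$. I would also remark that the dimension restriction $n\geq 5$ enters only through the sign of $n-4$: for $n=4$ the equation becomes $\Delta R=0$ and one appeals instead to Yau's theorem, and for $n<4$ the coefficient is negative and the conclusion fails in general. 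Finally, since the hypothesis is stated for the non-compact case, the compact case (where $M$ closed forces $\int_M \Delta R=0$, hence $\int_M R^2=0$ when $n\ne 4$) can be disposed of in one line if desired.
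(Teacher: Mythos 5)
Your argument is correct, and it is worth pointing out that the paper does not prove this lemma at all: it is recalled verbatim from \cite{CMM} (Lemma 5.1 there), where the route is to observe that, since $\Delta R=\frac{n-4}{4(n-1)}R^2\geq 0$ for $n\geq 5$, the function $R_+=\max\{R,0\}$ is a non-negative subharmonic function in $L^q(M)$ with $q>1$, so Yau's $L^p$-Liouville theorem forces $R_+$ to be constant, and a positive constant is incompatible with \eqref{eq1}. Your cutoff computation is a self-contained unwinding of the same mechanism: testing \eqref{eq1} against $R_+^{q-1}\phi^2$ is precisely the Caccioppoli estimate underlying Yau's theorem, but by keeping the quantitative zeroth-order term $\frac{n-4}{4(n-1)}\int R_+^{q+1}\phi^2$ on the left you obtain $R_+\equiv 0$ in one stroke, with no black box and no intermediate ``constant'' step. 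The two subtleties you flag are the right ones and are handled adequately: the compactly supported cutoff removes any issue with integration by parts on a non-compact manifold, and for $1<q<2$ replacing $R_+$ by $(R-\eps)_+$ for a regular value $\eps$ (so that the weight $(R-\eps)_+^{q-2}$, with exponent $q-2>-1$, is locally integrable near the level set) and letting $\eps\to 0$ by monotone convergence is routine. Two minor remarks: the opening appeal to the strong maximum principle is superfluous, since your estimate yields $R_+\equiv 0$ directly (the maximum-principle dichotomy is exactly what the paper's Corollary \ref{c-strong} adds afterwards); and the aside that the conclusion ``fails in general'' for $n<4$ is imprecise --- there the coefficient merely changes sign, making $R$ superharmonic, and the symmetric argument applied to $R_-$ gives the reversed inequality --- but this does not affect the proof of the stated lemma.
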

%\begin{proof} We  prove this by contradiction. Assume that there exists a point $O\in M^{n}$ such that $R(O)>0$. For $s>0$, we define the open set
%$$
%M^{+}_{s} = \{ p\in M^{n}|\, R(p)>0 \} \cap B_{s} (O) \,,
%$$
%where $B_{s}(O)$ is the geodesic ball of radius $s$ centered in $O$. Let $\eta$ be a smooth cutoff function such that $\eta\equiv1$ on $B_s(O)$, $\eta\equiv0$ on $B_{2s}^c(O)$, $0\leq\eta\leq1$ on $M^n$ and $|\nabla\eta|\leq\frac{c}{s}$, $c>0$ independent of $s$. From equation~\eqref{eq1}, an integration by part and an application of Young's inequality yield
%
%\begin{eqnarray*}
%\int_{M^{+}_{2s}}R^{q+1} \eta^{2} \,dV_{g} &=& \frac{4(n-1)}{n-4} \int_{M^{+}_{2s}} R^{q-1} \Delta R \,\eta^{2} \,dV_{g} \\
%&=& - \frac{4(n-1)(q-1)}{n-4}\int_{M^{+}_{2s}}|\nabla R|^{2} R^{q-2} \eta^{2} \,dV_{g} \\
%&& - \frac{8(n-1)}{n-4} \int_{M^{+}_{2s}} \langle \nabla R, \nabla\eta \rangle R^{q-1} \eta \,dV_{g} \\
%&\leq& \frac{4(n-1)}{(n-4)(q-1)}\int_{M^{+}_{2s}} R^{q} |\nabla \eta|^{2} \,dV_{g} \\
%&\leq& \frac{4(n-1)}{(n-4)(q-1)s^{2}} \int_{M^{n}} |R|^{q} \,dV_{g} \,.
%\end{eqnarray*}
%By letting $s\rightarrow +\infty$, we get that the set $M^{+}= \{p\in M^{n}| R(p)>0\}$ must have zero measure, so it must be empty: this is a contradiction, since $O\in M^{+}$.
%
%\end{proof}
Now, if $n\geq 5$, then by \eqref{eq1} $R$ is subharmonic, therefore Lemma \ref{l-hdnonpos} and the strong maximum principle imply the following
\begin{cor}\label{c-strong} Let $(M^{n},g)$, $n\geq5$, be a complete, non-compact critical metric of $\mathfrak{S}^2$ with $R\in L^{q}(M^{n})$ for some $1<q<\infty$. Then $(M^{n},g)$ is either scalar flat or it has negative scalar curvature.
\end{cor}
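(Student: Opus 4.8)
The plan is to derive this as an immediate consequence of Lemma \ref{l-hdnonpos} together with the strong maximum principle applied to the trace equation \eqref{eq1}; the substantive work has already been carried out in the Lemma.

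First I would invoke Lemma \ref{l-hdnonpos}: under the stated hypotheses (complete, non-compact critical metric of $\mathfrak{S}^2$ with $R\in L^q(M^n)$ for some $1<q<\infty$, and $n\geq 5$) one gets $R\leq 0$ on all of $M^n$. Next, since $n\geq 5$ implies $\frac{n-4}{4(n-1)}>0$, equation \eqref{eq1} reads $\Delta R=\frac{n-4}{4(n-1)}R^{2}\geq 0$, so $R$ is a smooth subharmonic function on the manifold without boundary $M^n$ (smoothness of $R$ following from smoothness of $g$). As $R\leq 0$, the constant $0$ is an upper bound for $R$ on $M^n$. Assuming $M^n$ connected (otherwise one argues component by component), the strong maximum principle for subharmonic functions yields the dichotomy: either $R$ attains the value $0$ somewhere, hence attains an interior maximum, so $R\equiv 0$ and $(M^n,g)$ is scalar flat; or $R<0$ everywhere, i.e.\ $(M^n,g)$ has strictly negative scalar curvature. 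This is exactly the asserted conclusion.

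Since $M^n$ has empty boundary and $R\in C^\infty$, there are no regularity or boundary subtleties in invoking the maximum principle, so no genuine obstacle arises at this step; the only point worth recording is that the positivity of the constant $\frac{n-4}{4(n-1)}$ for $n\geq 5$ is precisely what makes $R$ subharmonic and allows the argument to run — all the analytic difficulty is absorbed into Lemma \ref{l-hdnonpos}.
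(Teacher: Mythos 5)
Your argument is correct and coincides with the paper's own proof: Lemma \ref{l-hdnonpos} gives $R\leq 0$, equation \eqref{eq1} with $n\geq 5$ makes $R$ subharmonic, and the strong maximum principle yields the dichotomy $R\equiv 0$ or $R<0$. No gaps.
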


From now on we will assume that $(M^{n},g)$, $n\geq 5$, is a complete, non-compact, critical metric of $\mathfrak{S}^2$ with $R\in L^{q}(M^{n})$ for some $1<q<\infty$ and with negative scalar curvature. Let $u:=-R>0$ on $M$. From the critical equations, we have
\begin{equation}\label{equ}
\ricc =\frac{\nabla^{2} u}{u}- \frac{3}{4(n-1)} u\,g \,,
\end{equation}
\begin{equation}\label{eq1u}
\Delta u \,=- \frac{n-4}{4(n-1)} u^{2} \,,
\end{equation}

\begin{prop}\label{pro-qe} Let $(M^{n},g)$, $n\geq 5$, be a critical metric of $\mathfrak{S}^2$ with negative scalar curvature. Then, for all $\RR\ni k\neq 0, \frac{1}{n-2}$, the conformal metric $$\gt=|R|^{2k}g=u^{2k}g$$ satisfies
\begin{equation}\label{eq-qe0}
\mathrm{Ric}_{\gt}+\nabla^2_{\gt}f-\frac{1+2k-(n-2)k^2}{[(n-2)k-1]^2}df\otimes df=\frac{(n-4)k-3}{4(n-1)}e^{\frac{1-2k}{(n-2)k-1}f}\,\gt
\end{equation}
with
$$
f=[(n-2)k-1]\log |R|=[(n-2)k-1]\log u.
$$
\end{prop}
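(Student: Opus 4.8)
The plan is to compute $\Ric_{\gt}$ and $\nabla^2_{\gt}f$ directly from the two standard conformal transformation laws, and then to substitute the critical equations \eqref{equ}--\eqref{eq1u}. Since $R<0$ we set $u=-R>0$, so that $\log u$ is a smooth function, and we write $\gt=e^{2\varphi}g$ with $\varphi=k\log u$; then $d\varphi=k\,u^{-1}du$, $\nabla^2\varphi=k\,u^{-1}\nabla^2 u-k\,u^{-2}\,du\otimes du$, $\Delta\varphi=k\,u^{-1}\Delta u-k\,u^{-2}|\nabla u|^2$ and $|\nabla\varphi|^2=k^2u^{-2}|\nabla u|^2$. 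First I would recall the conformal change of the Ricci tensor,
\[
\Ric_{\gt}=\Ric_g-(n-2)\pa{\nabla^2\varphi-d\varphi\otimes d\varphi}-\pa{\Delta\varphi+(n-2)|\nabla\varphi|^2}g
\]
(with $\Delta$ the trace of the Hessian, as in \eqref{eq1}), together with the conformal change of the Hessian of an arbitrary function,
\[
\nabla^2_{\gt}f=\nabla^2_g f-d\varphi\otimes df-df\otimes d\varphi+\pair{\nabla\varphi,\nabla f}_g\,g.
\]
Choosing $f=\sq{(n-2)k-1}\log u$, so that $df=\sq{(n-2)k-1}u^{-1}du$, and inserting $\Ric_g=u^{-1}\nabla^2 u-\tfrac{3}{4(n-1)}u\,g$ from \eqref{equ} and $\Delta u=-\tfrac{n-4}{4(n-1)}u^2$ from \eqref{eq1u}, the sum $\Ric_{\gt}+\nabla^2_{\gt}f$ becomes a combination of the three tensors $u^{-1}\nabla^2 u$, $u^{-2}\,du\otimes du$ and $\pa{u^{-2}|\nabla u|^2}g$, plus a multiple of $u\,g$.

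The core of the argument is then to collect these terms. I expect the coefficient of $u^{-1}\nabla^2 u$ to be $\sq{1-(n-2)k}+\sq{(n-2)k-1}=0$ and, likewise, the coefficient of $\pa{u^{-2}|\nabla u|^2}g$ to vanish identically; it is exactly these two simultaneous cancellations that force the choice $f=\sq{(n-2)k-1}\log u$, and they are possible only because \eqref{equ} already exhibits $\Ric_g$ itself as a Hessian. What then remains is a multiple of $u^{-2}\,du\otimes du$ with coefficient $1+2k-(n-2)k^2$, plus the term $\tfrac{(n-4)k-3}{4(n-1)}u\,g$. Since $k\neq\tfrac{1}{n-2}$, I would rewrite $u^{-2}\,du\otimes du=\sq{(n-2)k-1}^{-2}df\otimes df$; and using $g=u^{-2k}\gt$ together with $\log u=f/\sq{(n-2)k-1}$, I would rewrite $u\,g=u^{1-2k}\gt=e^{\frac{1-2k}{(n-2)k-1}f}\gt$. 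Substituting these two identities yields precisely \eqref{eq-qe0}.

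I do not anticipate any genuine obstacle: the statement follows from a direct, if slightly lengthy, computation, the only delicate point being the algebraic bookkeeping required to see the cancellation of the $u^{-1}\nabla^2 u$ and $\pa{u^{-2}|\nabla u|^2}g$ contributions. (Excluding $k=0,\tfrac{1}{n-2}$ merely guarantees that $\gt\neq g$ and that $f$ is a well-defined, non-constant potential.) As a consistency check one verifies that the value $k=\tfrac{3}{n-4}$ makes the right-hand side of \eqref{eq-qe0} vanish and the coefficient in front of $df\otimes df$ equal to $\tfrac{n-10}{4(n-1)}$, recovering the steady quasi-Einstein structure described in the Introduction.
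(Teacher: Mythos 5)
Your proposal is correct and follows essentially the same route as the paper's proof: the same two conformal transformation laws for $\mathrm{Ric}$ and the Hessian with $\varphi=k\log u$, the same choice of $f$, and substitution of the critical equations \eqref{equ}--\eqref{eq1u}; you simply make explicit the cancellations of the $u^{-1}\nabla^2 u$ and $u^{-2}|\nabla u|^2\,g$ terms that the paper leaves to the reader. The bookkeeping checks out, including the final rewriting $u\,g=e^{\frac{1-2k}{(n-2)k-1}f}\,\gt$.
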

\begin{proof}
Since $f=[(n-2)k-1]\log |R|=[(n-2)k-1]\log u$, we  have
  \[
  df = [(n-2)k-1]\frac{dR}{R}=[(n-2)k-1]\frac{du}{u}
  \]
  and
   \[
   \nabla^2_gf= [(n-2)k-1]\left(\frac{\nabla^2_gu}{u}-\frac{du\otimes du}{u^2}\right),
   \]
   which implies
  \[
   \Delta_gf = [(n-2)k-1]\left(\frac{\Delta_gu}{u}-\frac{\abs{\nabla_gu}^2_g}{u^2}\right).
   \]
    On the other hand, from the standard formulas for a conformal change of the metric $\tilde{g}=e^{2\varphi}g$, $\varphi\in C^\infty(M)$, $\varphi>0$ we get
 \[
  \mathrm{Ric}_{\gt} = \mathrm{Ric}_g-(n-2)\left(\nabla_g^2\varphi-d\varphi\otimes d\varphi\right)-\left[\Delta_g \varphi+(n-2)\abs{\nabla_g \varphi}_g^2\right]g
  \]
  and
   \[
  \nabla^2_{\gt} f = \nabla^2_gf-\left(df\otimes d\varphi+d\varphi\otimes df\right)+g\left(\nabla f, \nabla \varphi\right)g.
  \]
  Note that, in our case, $\varphi=k\log u$; now we exploit the fact that $u$ satisfies equations \eqref{equ} and \eqref{eq1u} to conclude that
  \begin{align*}
 \mathrm{Ric}_{\gt}+\nabla^2_{\gt}f-\frac{1+2k-(n-2)k^2}{[(n-2)k-1]^2}df\otimes df=\frac{(n-4)k-3}{4(n-1)}e^{\frac{1-2k}{(n-2)k-1}f}\,\gt.
  \end{align*}

\end{proof}

\begin{cor} \label{cor-qe} Let $(M^{n},g)$, $n\geq 5$, be a critical metric of $\mathfrak{S}^2$ with negative scalar curvature. Then the conformal metric $$\gt=|R|^{\frac{6}{n-4}
}g$$ satisfies
\begin{equation}\label{eq-qe}
\mathrm{Ric}_{\gt}+\nabla^2_{\gt}f-\frac{n-10}{4(n-1)}df\otimes df=0
\end{equation}
with
$$
f=\frac{2(n-1)}{n-4}\log |R|.
$$
\end{cor}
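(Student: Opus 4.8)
The plan is to deduce Corollary~\ref{cor-qe} as a direct specialization of Proposition~\ref{pro-qe}, by choosing the parameter $k$ so that the conformal factor $|R|^{2k}$ coincides with $|R|^{6/(n-4)}$ and then checking that the right-hand side of \eqref{eq-qe0} vanishes. Concretely, I would set $2k = \frac{6}{n-4}$, that is $k = \frac{3}{n-4}$, and observe first that for $n\geq 5$ this value satisfies $k\neq 0$ and $k\neq \frac{1}{n-2}$ (the latter because $3(n-2)\neq n-4$ for all $n$), so Proposition~\ref{pro-qe} applies.

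The key computation is then purely algebraic. With $k = \frac{3}{n-4}$ one computes $(n-2)k - 1 = \frac{3(n-2)-(n-4)}{n-4} = \frac{2n-2}{n-4} = \frac{2(n-1)}{n-4}$, which immediately identifies the potential function: $f = [(n-2)k-1]\log|R| = \frac{2(n-1)}{n-4}\log|R|$, exactly as claimed. Next, the coefficient on the right-hand side of \eqref{eq-qe0} is $\frac{(n-4)k - 3}{4(n-1)}$, and since $(n-4)k = (n-4)\cdot\frac{3}{n-4} = 3$, this coefficient is $\frac{3-3}{4(n-1)} = 0$, so the entire right-hand side vanishes regardless of the exponential factor. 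Finally, I would simplify the coefficient of $df\otimes df$: substituting $k = \frac{3}{n-4}$ into $\frac{1+2k-(n-2)k^2}{[(n-2)k-1]^2}$, the numerator becomes $1 + \frac{6}{n-4} - (n-2)\frac{9}{(n-4)^2} = \frac{(n-4)^2 + 6(n-4) - 9(n-2)}{(n-4)^2} = \frac{n^2 - 8n + 16 + 6n - 24 - 9n + 18}{(n-4)^2} = \frac{n^2 - 11n + 10}{(n-4)^2} = \frac{(n-1)(n-10)}{(n-4)^2}$, while the denominator is $\left(\frac{2(n-1)}{n-4}\right)^2 = \frac{4(n-1)^2}{(n-4)^2}$; dividing gives $\frac{(n-1)(n-10)}{4(n-1)^2} = \frac{n-10}{4(n-1)}$, which is precisely the coefficient appearing in \eqref{eq-qe}.

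Assembling these three facts into \eqref{eq-qe0} yields $\mathrm{Ric}_{\gt} + \nabla^2_{\gt} f - \frac{n-10}{4(n-1)}df\otimes df = 0$ with $f = \frac{2(n-1)}{n-4}\log|R|$, which is the assertion. There is essentially no obstacle here: the only thing to be careful about is the bookkeeping in the quadratic-in-$k$ simplification of the $df\otimes df$ coefficient, and the verification that the chosen $k$ avoids the excluded values $0$ and $\frac{1}{n-2}$ so that Proposition~\ref{pro-qe} is legitimately applicable. The fact that the right-hand side collapses to zero precisely when $k = \frac{3}{n-4}$ is the geometric content being recorded, and it falls out immediately from $(n-4)k = 3$.
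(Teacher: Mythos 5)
Your proof is correct and is exactly how the paper intends the corollary to be obtained: the paper states it without proof as the immediate specialization $k=\frac{3}{n-4}$ of Proposition~\ref{pro-qe}, and your algebraic verifications of the potential $f$, the vanishing of the right-hand side via $(n-4)k=3$, and the simplification of the $df\otimes df$ coefficient to $\frac{n-10}{4(n-1)}$ all check out.
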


\section{Completeness of the conformal metric}\label{compl}
In this Section we show that, under the hypotheses of  Theorem \ref{teo}, if $R$ is negative on $M$ then the conformal metric $$\gt=|R|^\frac{6}{n-4}g$$ is complete on $M$.
We have the following result, which holds for $n\geq10$:
\begin{prop}\label{pro-completeness} Let $(M^{n},g)$, $n\geq 10$, be a complete, non-compact, critical metric of $\mathfrak{S}^2$ with negative scalar curvature. Then, the conformal metric $$\gt=|R|^{\frac{6}{n-4}}g$$ is complete.
\end{prop}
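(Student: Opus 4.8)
The plan is to argue by contradiction: suppose $\gt = u^{6/(n-4)}g$ is incomplete, so there is a $\gt$-geodesic $\gamma:[0,\ell)\to M$ of finite $\gt$-length that leaves every compact set of $M$ (it cannot converge to an interior point, since $\gt$ is a smooth metric on $M$). Because $(M,g)$ is complete and $\gamma$ escapes every compact set, the $g$-length of $\gamma$ must be infinite. Comparing the two lengths, $\mathrm{Length}_{\gt}(\gamma)=\int_0^\ell u^{3/(n-4)}\,\abs{\dot\gamma}_g\,ds$, the finiteness of the left side together with the divergence of $\int_0^\ell \abs{\dot\gamma}_g\,ds$ forces $u=-R$ to decay to $0$ along $\gamma$, and in a quantitatively controlled way: roughly, $u^{3/(n-4)}$ must be integrable against $g$-arclength while $\abs{\dot\gamma}_g$ is not, so $u\to0$ along the curve.

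The heart of the matter is to rule this out using the quasi-Einstein structure of Corollary 2.7. Since $n\geq 10$ we have $\tfrac{n-10}{4(n-1)}\geq 0$, so \eqref{eq-qe} says $(M,\gt)$ is a steady quasi-Einstein manifold (a steady gradient Ricci soliton when $n=10$) with potential $f=\tfrac{2(n-1)}{n-4}\log u$. The key analytic input, available in the literature (Cao--Chen / Munteanu--Sesum type results for steady solitons when $n=10$, and the quasi-Einstein analogues for $n\geq 11$), is that such manifolds have $\Scal_{\gt}\geq 0$, with effective control; more importantly, there are standard lower bounds and, via the soliton/quasi-Einstein identities, control on $\abs{\nabla_{\gt} f}_{\gt}$ and on $f$ itself. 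Concretely, I would use the quasi-Einstein identity obtained by tracing and by applying the contracted second Bianchi identity to \eqref{eq-qe} to get a relation of the form $\Scal_{\gt}+\Delta_{\gt}f - c\,\abs{\nabla_{\gt}f}_{\gt}^2 = 0$ and a first-order ``conservation law'' $\Scal_{\gt}+ c'\abs{\nabla_{\gt}f}^2_{\gt} = \text{const}$ along the flow of $\nabla f$; combined with $\Scal_{\gt}\geq 0$ this bounds $\abs{\nabla_{\gt}f}_{\gt}$ and hence bounds the growth of $f$ linearly in $\gt$-distance: $f(\gamma(s))\geq f(\gamma(0)) - C\,\mathrm{dist}_{\gt}(\gamma(0),\gamma(s))\geq f(\gamma(0))-C\ell$. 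Since $\gamma$ has finite $\gt$-length, $f$ — and therefore $\log u$ — stays bounded below along $\gamma$, i.e. $u$ stays bounded away from $0$ along $\gamma$. But then $\mathrm{Length}_{\gt}(\gamma)=\int_0^\ell u^{3/(n-4)}\abs{\dot\gamma}_g\,ds \geq \delta\int_0^\ell\abs{\dot\gamma}_g\,ds=+\infty$, contradicting finiteness. Hence $\gt$ is complete.

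The main obstacle — and the reason the hypothesis $n\geq 10$ is essential — is precisely the sign $\tfrac{n-10}{4(n-1)}\geq 0$: this is what makes \eqref{eq-qe} a genuine steady quasi-Einstein (or soliton) equation to which the nonnegativity-of-scalar-curvature results apply, and it is also what makes the $\abs{\nabla_{\gt}f}$ bound run in the right direction. For $5\le n\le 9$ the coefficient is negative, the structure equation is of a different type, the relevant scalar curvature bound is unavailable, and the argument breaks down — consistent with the fact that the authors leave those dimensions open. A secondary technical point to handle carefully is the case $n=10$ versus $n\geq 11$: for $n=10$ one invokes the steady Ricci soliton literature directly, while for $n\geq 11$ one needs the steady quasi-Einstein analogue, so the write-up should cite the appropriate result in each range and check that the quantitative form (not merely $\Scal_{\gt}\ge 0$ but the companion gradient estimate) is what is being used.
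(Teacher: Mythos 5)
Your proposal has a genuine circularity. The analytic input you rely on --- nonnegativity of $\Scal_{\gt}$ for steady gradient Ricci solitons (B.-L.~Chen) and for steady quasi-Einstein manifolds (Wang), together with the companion identity $\Scal_{\gt}+c'\abs{\nabla_{\gt}f}_{\gt}^2=\mathrm{const}$ used to bound $\abs{\nabla_{\gt}f}_{\gt}$ --- consists of theorems about \emph{complete} metrics. Completeness of $\gt$ is precisely what the proposition asserts, and you invoke these results on the metric $\gt$ before knowing it is complete. (The Hamilton-type conservation law is indeed a local identity, but without $\Scal_{\gt}\geq 0$ it gives no upper bound on $\abs{\nabla_{\gt}f}_{\gt}$, and Chen's and Wang's nonnegativity theorems fail to apply, and can fail to hold, on incomplete manifolds.) This is not a presentational issue: the logical order in the paper is the opposite of yours. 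The paper first proves completeness of $\gt$ by an argument that uses \emph{only} the original complete metric $g$ and the critical equations, and only afterwards, in the proof of Theorem~\ref{teo}, does it apply \cite{BLChen} and \cite[Theorem 1.4]{wang} to the by-then-complete $\gt$ to get $\widetilde{R}\geq 0$ and the gradient estimate \eqref{eq-gradest}.

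The mechanism the paper actually uses is a Fischer--Colbrie-type argument: one constructs a divergent $\gt$-minimizing geodesic ray $\gamma$ emanating from a fixed point (as a limit of minimizers to farther and farther $g$-geodesic spheres, parametrized by $g$-arclength), and then exploits the second variation inequality $\int (n-1)(\varphi_{\tilde s})^2-\widetilde{R}_{11}\varphi^2\,d\tilde s\geq 0$ along $\gamma$. Expressing $\widetilde{R}_{11}$ through the conformal change and the critical equations \eqref{equ}--\eqref{eq1u}, substituting $\varphi=u^k\psi$ and optimizing over auxiliary parameters, one shows (using $n\geq 10$ to make a certain quadratic form have the right sign) that $\int_\gamma u^k\,ds=+\infty$, i.e.\ the ray has infinite $\gt$-length, which yields completeness. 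If you want to salvage your outline, you would need a version of the lower scalar curvature bound for $\gt$ that does not presuppose completeness of $\gt$; as written, the argument assumes what it sets out to prove.
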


\begin{proof}

Let $u=-R>0$ and $\frac{3}{n-4}\leq k<1$. As shown in \cite[Theorem 1]{fisch}, given a fixed reference point $o\in M^n$, we can construct a $\gt$-minimizing geodesic
$$
\gamma(s):[0,\infty)\to M^n,$$
where $s$ is the $g$-arclength. For the sake of completeness, we report the argument here. First of all, for every $\rho>0$, we consider the geodesic ball (of $g$) centered at $o$ of radius $\rho$, $B_\rho(o)$. Then, we first claim that there exists a $\gt$-minimizing geodesic joining $o$ to the closest (in $\gt$) boundary point of $B_\rho(o)$. Indeed, consider $u_\rho:=u+\eta$, where $\eta$ is a non-negative smooth function such that $\eta\equiv0$ in $B_\rho(o)$ and $\eta\equiv 1$ on $B_{\rho+1}^c(o)$. Since $u_\rho$ is bounded below away from $0$, the metric
$$\gt_\rho=u_\rho^{2k}g$$ is complete, and thus there exist $\gt_\rho$-minimizing geodesics joining $o$ to any boundary point of $B_\rho(o)$. Now let $\rho_i>0$ be a sequence of radii monotonically diverging to $+\infty$. For every $\rho_i>0$, since $\partial B_{\rho_i}(o)$ is compact, there exists $x_i\in \partial B_{\rho_i}(o)$ so that $x_i$ is closest (in $\gt_{\rho_i}$) to $o$. Let $\gamma_i$ be the $\gt_{\rho_i}$-minimizing geodesic joining $o$ to $x_i$. Note that $\gamma_i\subset \overline{B}_{\rho_i}(o)$, and since $u_{\rho_i}=u$ in $\overline{B}_{\rho_i}(o)$, then $\gamma_i$ is a $\gt$-minimizing geodesic.
We parametrize $\gamma_i$ with respect to $g$-arclength. In particular, since $|\dot \gamma_i(s)|_g=1$ for every $s$, up to subsequences, the sequence $ \dot\gamma_i(0)$ converges to a limit vector as $\rho_i\to \infty$. Thus, by ODE theory and Ascoli-Arzel\`a, $\gamma_i$ converge on compact sets of $[0,\infty)$ to a limiting curve $\gamma$ which is a $\gt$-minimizing geodesic and is parametrized by $g$-arclength.

We observe that the completeness of the metric $\gt=u^{2k}g$ will follow if we can show that the $\gt$-length of $\gamma$ is infinite, i.e.
$$
\int_\gamma\,d\tilde{s} = \int_\gamma u^{k}\,ds=+\infty.
$$
Indeed, by construction, the $\gt$-length of every other divergent geodesic starting from $o$ (i.e. its image does not lie in any ball $B_\rho(o)$) must be greater than or equal to that of $\gamma$.

Since $\gamma$ is $\gt$-minimizing, by the second variation formula one has
\begin{equation}\label{eq-sv}
\int_0^{\tilde{r}}(n-1)\left(\frac{d\varphi}{d\tilde{s}}\right)^2-\widetilde{R}_{11}\varphi^2\,d\tilde{s}\geq 0,
\end{equation}
for all $\varphi\in V_{\tilde{r}}$, where we set
$$V_{\tilde{r}}=\{\varphi\in C^0([0,\infty))\,|\,\varphi(s)=\varphi(0)=0\,\,\forall s\geq A,\,\varphi\in C^2([0,A])\,\text{ for some }0<A<\tilde{r} \},$$
$V:=V_{\infty}$ and where $\tilde{r}$ is the length of $\gamma$ in the metric $\gt$ and
$$
\widetilde{R}_{11} = \widetilde{\mathrm{Ric}}\left(\frac{d\gamma}{d\tilde{s}},\frac{d\gamma}{d\tilde{s}}\right).
$$
From \cite[Appendix]{nelli}, we have
$$
\widetilde{R}_{11}=u^{-2k}\left\{R_{11}-k(n-2)(\log u)_{ss}-k\frac{\Delta u}{u}+k\frac{|\nabla u|^2}{u^2}\right\}
$$
where $R_{11} = \ricc(e_1,e_1)=\ricc\left(\frac{d\gamma}{ds},\frac{d\gamma}{ds}\right)$. Using the critical equation \eqref{equ} and \cite[Appendix]{nelli} we obtain
\begin{align*}
R_{11} &= \frac{\nabla^2_{11}u}{u}-\frac{3}{4(n-1)}u\, g_{11} \\
&= \nabla^2_{11}\log u + |(\log u)_s|^2 -\frac{3}{4(n-1)}u \\
&= (\log u)_{ss}-k|(\nabla \log u)^{\perp}|^2+|(\log u)_s|^2 -\frac{3}{4(n-1)}u
\end{align*}
where $(\nabla \log u)^{\perp}$ is the component of $\nabla \log u$ perpendicular to $\frac{d\gamma}{ds}$. Therefore, from \eqref{eq1u}, we get
\begin{align*}
\widetilde{R}_{11}&=u^{-2k}\left\{[1-(n-2)k](\log u)_{ss}+\frac{(n-4)k-3}{4(n-1)}u+|(\log u)_s|^2 +k|\nabla \log u|^2-k|(\nabla \log u)^{\perp}|^2\right\}\\
&=u^{-2k}\left\{[1-(n-2)k](\log u)_{ss}+\frac{(n-4)k-3}{4(n-1)}u+(1+k)|(\log u)_s|^2 \right\}.
\end{align*}
From inequality \eqref{eq-sv}, since $k\geq \frac{3}{n-4}$, we obtain
\begin{align*}
(n-1)\int_0^{+\infty}& (\varphi_s)^2u^{-k}\,ds \\
&\geq \int_0^{+\infty} \varphi^2 u^{-k}\left\{[1-(n-2)k](\log u)_{ss}+\frac{(n-4)k-3}{4(n-1)}u+(1+k)|(\log u)_s|^2\right\}\,ds\\
&\geq \int_0^{+\infty} \varphi^2 u^{-k}\left\{[1-(n-2)k](\log u)_{ss}+(1+k)|(\log u)_s|^2\right\}\,ds,
\end{align*}
for all $\varphi\in V$. Integrating by parts, we obtain
$$
\int_0^{+\infty} \varphi^2 u^{-k}(\log u)_{ss}\,ds=-2\int_0^{+\infty} \varphi u^{-k-1}\varphi_s u_s\,ds+k\int_0^{+\infty} \varphi^2 u^{-k-2}(u_s)^2\,ds,
$$
and thus
\begin{align*}
(n-1)\int_0^{+\infty} (\varphi_s)^2u^{-k}\,ds &\geq -2[1-(n-2)k]\int_0^{+\infty}\varphi u^{-k-1}\varphi_s u_s\,ds \\
&\quad+ [1+2k-k^2(n-2)]\int_0^{+\infty} \varphi^2 u^{-k-2}(u_s)^2\,ds.
\end{align*}
Let now  $\varphi=u^k\psi$, with $\psi\in V$. We have
\begin{align*}
  \varphi^2u^{-k}&=u^{k}\psi^2,\\ \varphi_s&=k\psi u^{k-1}u_s+u^k\psi_s,\\ (\varphi_s)^2u^{-k}&=k^2\psi^2u^{k-2}(u_s)^2+u^k(\psi_s)^2+2k\psi\psi_su^{k-1}u_s,
\end{align*}
and substituting in the previous relation we get
\begin{align}\label{bo}
(n-1)\int_0^{+\infty} (\psi_s)^2u^{k}\,ds &\geq -2(1+k)\int_0^{+\infty}\psi u^{k-1}\psi_s u_s\,ds \\\nonumber
&\quad+ [1-k^2]\int_0^{+\infty} \psi^2 u^{k-2}(u_s)^2\,ds.
\end{align}
Integration by parts gives
$$
I:=\int_0^{+\infty}\psi u^{k-1}\psi_s u_s\,ds= -\frac{1}{k}\int_0^{+\infty} (\psi_s)^2u^{k}\,ds-\frac{1}{k}\int_0^{+\infty} \psi\,\psi_{ss} u^{k}\,ds
$$
Moreover, for every $t>1$ and, completing the square, for every $\eps>0$, we have
\begin{align}\label{eq12}\nonumber
2(1+k)I &=2(1+k)t I+2(1+k)(1-t)I\\\nonumber
&=-\frac{2t(1+k)}{k} \int_0^{+\infty} u^k(\psi_s)^2\,ds-\frac{2t(1+k)}{k} \int_0^{+\infty} \psi\psi_{ss} u^{k}\,ds\\\nonumber
&\quad+2(1+k)(1-t)\int_0^{+\infty}\psi\psi_s u^{k-1}u_s\,ds\\
&= -\frac{2t(1+k)}{k} \int_0^{+\infty} u^k(\psi_s)^2\,ds-\frac{2t(1+k)}{k} \int_0^{+\infty} \psi\psi_{ss} u^{k}\,ds\\\nonumber
&\quad+(1+k)(t-1)\eps \int_0^{+\infty}\psi^2u^{k-2}(u_s)^2\,ds+\frac{(1+k)(t-1)}{\eps}\int_0^{+\infty} u^k(\psi_s)^2\,ds\\\nonumber
&\quad+\frac{(1+k)(1-t)}{\eps}\int_0^{+\infty}u^k\left(\psi_s+\eps u^{-1}u_s \psi\right)^2\,ds.
\end{align}
Since $k<1$, choosing
$$
\eps:=\frac{1-k}{t-1}
$$
we obtain
\begin{align*}
2(1+k)I &= -\frac{2t(1+k)}{k} \int_0^{+\infty} \psi\psi_{ss} u^{k}\,ds+(1-k^2)\int_0^{+\infty}\psi^2u^{k-2}(u_s)^2\,ds\\
&\quad+\left[\frac{(1+k)(t-1)^2}{1-k}-\frac{2t(1+k)}{k}\right]\int_0^{+\infty} u^k(\psi_s)^2\,ds\\
&\quad-\frac{(1+k)(1-t)^2}{1-k}\int_0^{+\infty}u^k\left(\psi_s+\frac{1-k}{t-1} u^{-1}u_s \psi\right)^2\,ds.
\end{align*}
Therefore, from \eqref{bo}, we obtain
\begin{align}\label{e127}\nonumber
0 &\leq \left[\frac{(1+k)(t-1)^2}{1-k}-\frac{2t(1+k)}{k}+(n-1)\right]\int_0^{+\infty} u^k(\psi_s)^2\,ds-\frac{2t(1+k)}{k} \int_0^{+\infty} \psi\psi_{ss} u^{k}\,ds\\
&\quad -\frac{(1+k)(1-t)^2}{1-k}\int_0^{+\infty}u^k\left(\psi_s+\frac{1-k}{t-1} u^{-1}u_s \psi\right)^2\,ds
\end{align}
for every $t>1$. Let
$$
P(t):=\frac{(1+k)(t-1)^2}{1-k}-\frac{2t(1+k)}{k}+(n-1)
$$
A computation shows that $P(t)\leq0$ for some $t>1$ if and only if
$$
(1+k)(1-k)[1+2k-(n-2)k^2]\geq 0.
$$
Choose $k=\frac{3}{n-4}$.

\medskip

If $n>10$, then $(1+k)(1-k)[1+2k-(n-2)k^2]>0$ and thus $P(t)<0$ for some $t>1$. Therefore, we deduce
$$
0\leq -\int_0^{+\infty} u^{k}(\psi_s)^2\,ds-C \int_0^{+\infty} u^{k}\psi\psi_{ss} \,ds
$$
for some $C>0$ and every $\psi \in V$. Now we choose $\psi=s\eta$ with $\eta$ smooth with compact support in $[0,+\infty)$: thus
$$
\psi_s=\eta+s\eta_s,\quad\psi_{ss}=2\eta_s+s\eta_{ss},
$$
and we get
$$
\int_0^{+\infty} u^{k}\eta^2\,ds\leq \int_0^{+\infty} u^{k}\left(-2(C+1)s\eta\eta_s-Cs^2\eta\eta_{ss}-s^2(\eta_s)^2\right)\,ds.
$$
Choose $\eta$ so that $\eta\equiv 1$ on $[0,R]$, $\eta\equiv 0$ on $[2R,+\infty)$ and with $|\eta_s|$ and $|\eta_{ss}|$ bounded by $C/R$ and $C/R^2$, respectively, for $R\leq s\leq 2R$ and for some $C$ independent of $R$. Then
$$
\int_0^R u^{k}\,ds\leq \int_0^{+\infty} u^{k}\eta^2\,ds \leq C \int_R^{+\infty}u^{k}\,ds
$$
for some $C>0$ independent of $R$. We conclude that
$$
\int_0^{+\infty}u^{k}\,ds =+\infty,
$$
i.e. $\gt=u^{2k} g=u^{\frac{6}{n-4}}g$ is complete, if $n>10$.

\medskip

If $n=10$, then $k=1/2$ and $(1+k)(1-k)[1+2k-(n-2)k^2]=0$. In this case. it is easy to verify that $P(t)=3(t-2)^2$. Choose $t=2$. From \eqref{e127}, since $\eps=1/2$, we obtain
$$
\int_0^{+\infty}u^k\left(\psi_s+\frac12u^{-1}u_s \psi\right)^2\,ds\leq -C\int_0^{+\infty} u^{k}\psi\psi_{ss} \,ds
$$
for some $C>0$ and for every $\psi \in V$. Assume, by contradiction, that $u^k$ is integrable. Choosing again $\psi=s\eta$ with $\eta$ smooth so that $\eta\equiv 1$ on $[0,R]$, $\eta\equiv 0$ on $[2R,+\infty)$ and with $|\eta_s|$ and $|\eta_{ss}|$ bounded by $C/R$ and $C/R^2$, respectively, for $R\leq s\leq 2R$ and for some $C$ independent of $R$, we get that the right hand side tends to zero as $R$ tends to $+\infty$. By Fatou's lemma we obtain $su^{-1}u_s = -2$. Therefore, $u(s)=Cs^{-2}$, which contradicts the fact that $u^k=u^{1/2}$ is integrable. Therefore  $\gt=u^{2k} g=u g$ is complete also if $n=10$.
\end{proof}

\

\section{Proof of Theorem \ref{teo}}\label{pr1}

\begin{proof}[Proof of Theorem \ref{teo}] Let $(M^{n},g)$, $n\geq 10$, be a complete critical metric of $\mathfrak{S}^2$ with $R\in L^q(M^n)$ for some $1<q<q^*=\frac{7n-10}{2(n-4)}$. First of all, if $M^n$ is compact, then integrating \eqref{eq1} over $M^n$ we get $R\equiv 0$ on $M^n$. In case $M^n$ is non-compact, from Corollary \ref{c-strong}, either $R\equiv 0$ or $R<0$ on $M^n$. In the latter case we consider the conformal metric $$\gt=|R|^{\frac{6}{n-4}}g,$$ which is complete by Proposition \ref{pro-completeness} and satisfies \eqref{eq-qe}. In particular $(M^n,\gt)$ is a complete steady gradient Ricci soliton, if $n=10$, or a complete steady quasi-Einstein manifold, if $n>10$. In both cases, it is well known (see \cite{BLChen} and \cite[Theorem 1.4]{wang}) that the scalar curvature of $\gt$ must be nonnegative. By the formula for the conformal change, we obtain
\begin{align}\label{eq-rstima}\nonumber
0&\leq\widetilde{R}=e^{-2w}\left(R-2(n-1)\Delta w-(n-1)(n-2)|\nabla w|^2\right)\\\nonumber
&=u^{-\frac{6}{n-4}}\left(-u-\frac{6(n-1)}{n-4}\frac{\Delta u}{u}+\frac{6(n-1)}{n-4}\frac{|\nabla u|^2}{u^2}-\frac{9(n-1)(n-2)}{(n-4)^2}\frac{|\nabla u|^2}{u^2}\right)\\
&=u^{-\frac{6}{n-4}}\left(\frac12 u-\frac{3(n-1)(n+2)}{(n-4)^2}\frac{|\nabla u|^2}{u^2}\right)
\end{align}
where we used $w=\frac{3}{n-4}\log u$ as in the proof of Proposition \ref{pro-qe}, $R=-u$ and  \eqref{eq1u}. Thus
\begin{equation}\label{eq-gradest}
|\nabla u|^2 \leq \frac{(n-4)^2}{6(n-1)(n+2)}u^3.
\end{equation}
Fixing $O\in M^n$, arguing as in \cite[Corollary 5.7]{CMM}, from \eqref{eq-gradest}, we obtain
\begin{equation}\label{20}
u(x)\geq \frac{c_1}{c_2+d_{g}(x,O)^2}
\end{equation}
for every $x\in M^n$ and some positive constants $c_i=c_i(n,u(O))$, i=1,2. Now the result follows as in the proof of \cite[Theorem 1.5]{CMM}. For the sake of completeness we include the proof.

\smallskip

Let $\eta$ be a smooth cutoff function such that $\eta\equiv1$ on $B_s(O)$, $\eta\equiv0$ on $B_{2s}^c(O)$, $0\leq\eta\leq1$ on $M^n$ and $|\nabla\eta|\leq\frac{c}{s}$ for every $s\gg1$ with $c>0$ independent of $s$.

Then, using \eqref{eq1u} and \eqref{eq-gradest} we get
\begin{align*}
\frac{n-4}{4(n-1)}\int_M u^q \eta^2\,dV_{g} &= -\int_M \Delta u\, u^{q-2}\eta^2\,dV_{g} \\
&= (q-2) \int_M |\nabla u|^2 u^{q-3}\eta^2\,dV_{g} +2\int_M u^{q-2}\langle \nabla u,\nabla \eta\rangle \eta\,dV_{g}\\
&\leq \frac{(n-4)^2 \,\max\set{q-2, 0}}{6(n-1)(n+2)}\int_M u^{q}\eta^2\,dV_{g} +\frac{C}{s}\int_{B_{2s}(O)\setminus B_s(O)}u^{q-\frac{1}{2}}\,dV_{g},
\end{align*}
for some $C>0$. By \eqref{20}
\begin{equation}\label{11}
\frac{n-4}{4(n-1)}\int_M u^q \eta^2\,dV_{g}\leq\frac{(n-4)^2\,\max\set{q-2, 0}}{6(n-1)(n+2)} \int_M u^{q}\eta^2\,dV_{g} +C\frac{(1+s^2)^\frac{1}{2}}{s}\int_{B_s^c(O)}u^{q}\,dV_{g}.
\end{equation}
Thus, if $u\in L^q(M^n)$, we obtain
\begin{equation*}
\frac{(n-4)^2}{6(n-1)(n+2)}\left[\frac{3(n+2)}{2(n-4)}-\max\set{q-2, 0}\right]\int_M u^q \eta^2\,dV_{g}\leq C\frac{(1+s^2)^\frac{1}{2}}{s}\int_{B_s^c(O)}u^{q}\,dV_{g}\longrightarrow 0,
\end{equation*}
as $s\to +\infty$.
This yields $u\equiv0$, if 
$$
1<q<q^*=2+\frac{3(n+2)}{2(n-4)}=\tfrac{7n-10}{2(n-4)},
$$ which is a contradiction. This concludes the proof of Theorem \ref{teo}.
\end{proof}

\begin{rem} Note that the gradient estimate \eqref{eq-gradest} improves the one in \cite[Lemma 5.5]{CMM} (see also Remark 5.6 there for the explicit expression of the constant), since it is possible to show that, for every $n\geq 10$, the constant $\tfrac{(n-4)^2}{6(n-1)(n+2)}$ is always smaller than the corresponding constant appearing there. As a consequence, we see that the conclusion of Theorem \ref{teo} follows assuming $R\in L^q(M^n)$ with $1<q<\tfrac{7n-10}{2(n-4)}$, thus improving, for $n\geq 10$, \cite[Theorem 1.5]{CMM} also in this respect.

\end{rem}

\

\noindent{\bf Data availability statement}

\noindent Data sharing not applicable to this article as no datasets were generated or analysed during the current study.

\
\

\noindent{\bf Conflict of interest statement}

\noindent On behalf of all authors, the corresponding author states that there is no conflict of interest.

%\bibliographystyle{amsplain}
%\bibliography{critical}

\

\

\parindent=0pt

\end{document}